\theoremstyle{theorem}
\newtheorem{theorem}{Theorem}
\newtheorem{proposition}[theorem]{Proposition}
\newtheorem{lemma}[theorem]{Lemma}
\newtheorem{corollary}[theorem]{Corollary}
\theoremstyle{definition}
\DeclareMathOperator{\E}{\mathbb{E}}
\newcounter{boxlblcounter}  
\theoremstyle{definition}
\begin{document}

\title{The Slowest Coupon Collector's Problem}
\markright{The Slowest Coupon Collector's Problem}
\author{Tipaluck Krityakierne and Thotsaporn Aek Thanatipanonda}
\date{}
\maketitle

\begin{abstract}
In the classical coupon collector's problem, every box of breakfast cereal  contains one coupon from a collection of $n$ distinct coupons, each equally likely to appear. The goal is to find the expected number of boxes a player needs to purchase to complete the whole collection. In this work, we extend the classical problem to $k$ players who compete with one another to be the first to collect the whole collection. We find the expected numbers of boxes required for the slowest and fastest players to finish the game. The odds of a particular player being the slowest or fastest player will also be touched upon. The solutions will be discussed from both the tractable algebraic techniques as well as the probability point of views. \\

\noindent \textbf{Keywords:} coupon collector's problem; fastest player; slowest player; multiple players.
\end{abstract}

\section{Prologue}
The coupon collector's problem is a classical mathematics problem that shows up in a number of courses, from probability theory, simulations, programming, to name a few.  The classic version of the problem can be described as follows. 

{\it ``You buy cereals in order to collect coupons that come with it. The upcoming collection has $n$ collectible coupons. Each cereal box contains one coupon. Assume that every type of coupons is equally likely to appear. What is the expected number of boxes you need to buy until you have a complete set of $n$ coupons?''} 

By recalling the mean of the geometric distribution, the answer to the well-known problem above is $\E\left[X\right]=nH\left(n\right)$,  where $H\left(n\right)$ is the $n$-th harmonic number (see for example \cite[p. 225]{F}). An approximate solution to the coupon collector's problem is
\[
\E\left[X\right]\approx n\log n+\gamma n+\frac{1}{2},
\]
where $\gamma \approx 0.5772156649$ is the Euler–Mascheroni constant.

The problem has been extended to a scenario where the player has to collect multiple sets of coupons. This problem, known as the \textit{double dixie cup problem}, was solved by Newman and Shepp in 1960. Their results, published in American Mathematical Monthly,  showed that the expected number of boxes needed to complete $m$ sets of coupons is $n\left(\log n +(m-1) \log\log n+ \mathcal{O}(1)\right)$ \cite{NS}. 
In \cite{Z}, Zeilberger found the generating function for the expected 
number of types of cards of which the player has exactly $i$ copies at the end.
The note \cite{FS} gave an extensive review on approaches for solving the classical problem, and established some interesting results regarding multiple collections. Generalization of the problem to a two-player game has also been studied previously. For example, the probability that the faster player was never behind at any intermediate stage of the play has been investigated in \cite{MW}. 

In this work, we extend the problem to $k$ players who compete with one another in collecting the coupons. To our surprise,  generalizations of the coupon collector's problem in this direction seem to have never been addressed in the literature.  We thus take this opportunity to present and contribute some novel results. Notably, using algebraic recurrence relations and difference equations as the tools, our main theorem finds the expected number of boxes required for the slowest player to collect the whole collection of $n$ coupons. 
We further investigate the problem from a probability point of view, which allows us to provide full insight into the recurrence relation and the obtained solution. 

\subsection{A two-player scenario: the slower one}

As a warm up, we consider a generalized version of the expected \textit{maximum} time for two players  who are still missing $s$ and $t$ coupons, respectively. To be more precise, given  $0\leq s, t \leq n$, let $X_1(s)$ and $X_2(t)$ be random variables representing the number of boxes the first player (who are still missing $s$ coupons) and second player (missing $t$ coupons) need to open, until they each collect all $n$ coupons. 
Then,
\[
M\left(s,t\right):=\E\left[\max \{X_1{(s)},X_2{(t)}\}\right]
\]
is the expected number of boxes required for the {\it slower player}  to collect a complete set of  $n$ coupons.

Using the law of total expectation, conditioning on whether a player found a new coupon type in the next box or not, we can write a recurrence relation:
\begin{align}
\label{eq:rec_M_2players}
M\left(s,t\right)&= \left(\frac{s}{n}\right)\left(1-\frac{t}{n}\right)M\left(s-1,t\right) \,\,\,\,\,\,  {\text{first player (found a new coupon)}}\notag\\
&+ \left(1-\frac{s}{n}\right)\left(\frac{t}{n}\right)M\left(s,t-1\right) \,\,\,\,\,\,  {\text{second player}}\notag\\
&+ \left(\frac{s}{n}\cdot\frac{t}{n}\right)M\left(s-1,t-1\right) \,\,\,\,\,\,\,\,\,    {\text{both}}\notag\\
&+ \left(1-\frac{s}{n}\right)\left(1-\frac{t}{n}\right)M\left(s,t\right) \,\,\,\,\,\,  {\text{neither}}\notag\\
&+1 \,\,\,\,\,\,\,\,\,\,\,\,    {\text{(1 more box has been opened)},}
\end{align}
with the initial condition $M(0,0)=0$ and $M(s,t)=0$ if $s$ or $t < 0$.

For the two-player scenario, the recurrence relation takes a vector argument $[s, t]$. 
The initial condition $M(0,0)=0$ means that the game has ended since 
both players completed the whole set of coupons. Of course,
we will not consider the case when one of the arguments $s$ or $t$ is negative,
so we assign a zero value whenever this happens. 

\subsection{A one-player scenario}

The above recurrence can be simplified to get a recurrence for the classical one-player scenario:
\begin{align*}
M\left(s\right)&= \left(\frac{s}{n}\right)M\left(s-1\right) \,\,\,\,\,\,  {\text{the player found a new coupon}}\notag\\
&+ \left(1-\frac{s}{n}\right)M\left(s\right) \,\,\,\,\,\,  {\text{the player did not find a new coupon}}\notag\\
&+1 \,\,\,\,\,\,\,\,\,\,\,\,    {\text{(1 more box has been opened),}}
\end{align*}
with the initial condition $M(0)=0$ and $M(s)=0$ if $s<0$.
The reader can quickly verify that $M(s)=nH(s)$ indeed satisfies this recurrence together with the initial condition given, consistent with the established result of the classical coupon collector's problem. 

Our goal is to generalize the recurrence \eqref{eq:rec_M_2players} to $k>2$ players and come up with a general strategy for solving it. Before proceeding to a more detailed explanation, let us end this section with the main theorem of this paper.

\begin{theorem}
\label{thm:main}
Let $M\left(s_1,s_2,\dots,s_k\right):= E\left[\max \{X_1{(s_1)},\dots,X_k{(s_k)}\}\right]$
be the expected number of boxes required for the {\it slowest player} to collect all $n$ coupons. 
Then,
 \[ M\left(s_1,s_2,\dots,s_k\right)= nH(S) 
 -\frac{(H(S)-1)\sum_{i<j}s_is_j}{S(S-1)}
 +\mathcal{O}\left(\dfrac{1}{n}\right),\, \text{ where } S = \sum_{i=1}^ks_i. \]
\end{theorem}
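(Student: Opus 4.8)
The plan is to sidestep the recurrence and compute $M(s_1,\dots,s_k)$ directly, so that the closed form \emph{emerges} from two classical binomial identities rather than being guessed. Throughout, $k$ and $s_1,\dots,s_k$ are treated as fixed while $n\to\infty$. Since $\max_i X_i(s_i)>m$ exactly when not every $X_i(s_i)\le m$, and the players are independent, the tail-sum formula for the expectation of an $\N$-valued variable gives
\[
M(s_1,\dots,s_k)=\sum_{m=0}^{\infty}\Big(1-\prod_{i=1}^{k}\Pr[X_i(s_i)\le m]\Big).
\]
The event $\{X_i(s_i)\le m\}$ says that, among $m$ independent uniformly random coupons, each of the $s_i$ types player $i$ still lacks appears at least once; inclusion--exclusion over the subset of those types that fail to appear yields the exact formula $\Pr[X_i(s_i)\le m]=\sum_{j=0}^{s_i}(-1)^{j}\binom{s_i}{j}(1-\tfrac{j}{n})^{m}$.

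Substituting this, expanding the product over $i$, interchanging the finite sum over the multi-index $\mathbf j=(j_1,\dots,j_k)$ with the sum over $m$, and summing the geometric series $\sum_{m\ge 0}\big(\prod_i(1-j_i/n)\big)^{m}$ (the $\mathbf j=\mathbf 0$ term cancels the leading $1$) gives the \emph{exact} identity
\[
M(s_1,\dots,s_k)=\sum_{\mathbf j\neq\mathbf 0}(-1)^{|\mathbf j|+1}\Big(\prod_{i=1}^{k}\binom{s_i}{j_i}\Big)\frac{1}{1-\prod_{i=1}^{k}(1-j_i/n)},\qquad |\mathbf j|:=\sum_i j_i.
\]
Writing $J=|\mathbf j|$ and expanding the denominator, $1-\prod_i(1-j_i/n)=\tfrac{J}{n}-\tfrac{1}{n^{2}}\sum_{i<i'}j_ij_{i'}+O(n^{-3})$, hence $\big(1-\prod_i(1-j_i/n)\big)^{-1}=\tfrac{n}{J}+\tfrac{1}{J^{2}}\sum_{i<i'}j_ij_{i'}+O(n^{-1})$. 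Since there are only $\prod_i(s_i+1)=O(1)$ multi-indices, the errors accumulate to $o(1)$, and $M$ splits, up to $o(1)$, into two sums.

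For the first sum, $n\sum_{\mathbf j\ne\mathbf 0}\frac{(-1)^{J+1}}{J}\prod_i\binom{s_i}{j_i}$: grouping by $J$ and using Vandermonde's identity $\sum_{j_1+\cdots+j_k=J}\prod_i\binom{s_i}{j_i}=\binom{S}{J}$ turns it into $n\sum_{J=1}^{S}\frac{(-1)^{J+1}}{J}\binom{S}{J}=nH(S)$, the classical evaluation (e.g.\ via $\int_0^1\frac{1-(1-x)^S}{x}\,dx$ computed two ways). For the second sum, $\sum_{\mathbf j\ne\mathbf 0}(-1)^{J+1}\frac{\sum_{i<i'}j_ij_{i'}}{J^{2}}\prod_i\binom{s_i}{j_i}$: move the pair-sum outside, use $j\binom{s}{j}=s\binom{s-1}{j-1}$ to peel a factor $s_is_{i'}$ off the pair $(i,i')$, reindex $j_i\mapsto j_i+1$ and $j_{i'}\mapsto j_{i'}+1$, and apply Vandermonde again with total $S-2$; this collapses the sum to $\big(\sum_{i<j}s_is_j\big)\sum_{K\ge0}\frac{(-1)^{K+1}}{(K+2)^{2}}\binom{S-2}{K}$. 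The remaining single sum I would evaluate through $\frac{1}{(K+2)^2}=-\int_0^1 x^{K+1}\ln x\,dx$, which reduces it to the log-Beta integral $\int_0^1 x(1-x)^{S-2}\ln x\,dx=\frac{\psi(2)-\psi(S+1)}{S(S-1)}=-\frac{H(S)-1}{S(S-1)}$. Adding the two sums reproduces exactly $nH(S)-\frac{(H(S)-1)\sum_{i<j}s_is_j}{S(S-1)}$.

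The part I expect to be most delicate is the error control: the denominator expansion above is clean only while the $s_i$ stay bounded, so if one wants the estimate to remain meaningful when the $s_i$ are allowed to grow with $n$ (in particular the genuine $s_i=n$ coupon-collector regime), the tail of the $\mathbf j$-sum must be estimated more carefully, since then $\prod_i(s_i+1)$ is no longer $O(1)$ and the neglected $O(n^{-1})$ terms multiply a growing number of binomial coefficients. The two binomial identities are classical, but the second still needs the log-Beta evaluation, and one should confirm the endpoint behaviour of $\int_0^1 x(1-x)^{S-2}\ln x\,dx$ causes no trouble. As a shortcut — and closer in spirit to the two-player argument — one could instead simply posit the closed form and check that it satisfies the $k$-player analogue of \eqref{eq:rec_M_2players} up to $o(1)$; the derivation above is longer but has the virtue of explaining where the formula comes from.
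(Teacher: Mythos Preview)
Your argument is correct (under the stated regime of fixed $s_1,\dots,s_k$ and $n\to\infty$), and it is genuinely different from what the paper does. The paper's proof consists of a single line: the closed form was discovered experimentally and is then ``verified by substitution'' into the $k$-player extension of the recurrence~\eqref{eq:rec_M_2players}. Your route instead \emph{derives} the formula from first principles: tail-sum for the expectation, inclusion--exclusion for each player's CDF, an exact multi-index identity after summing the geometric series, and then a two-term asymptotic expansion evaluated via Vandermonde and a log-Beta integral. The payoff is explanatory --- one sees exactly why $nH(S)$ appears (it is the Vandermonde collapse of the leading term) and why the correction is proportional to $\sum_{i<j}s_is_j$ (it comes from the cross-terms in $1-\prod_i(1-j_i/n)$). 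The paper's verification is of course shorter, but it presupposes the answer and, strictly speaking, leaves the same loose end you flag: checking that an expression with an $o(1)$ remainder satisfies the recurrence up to $o(1)$ does not by itself control the accumulation of errors, so neither argument is fully rigorous in the growing-$s_i$ regime (e.g.\ $s_i=n$) that the paper later uses in Section~2.4. Your closing remark already anticipates the paper's shortcut, so you have in effect presented both approaches.
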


In the next section, we will introduce several important tools and concepts along the way during the course of proving the theorem. 

\section{Proof of the theorem}
We first give an algebraic proof of the theorem, and then in the next section we will provide an alternative proof (from the probability view point) for the leading term, 
which gives additional insight into the recurrence and the obtained solution. 
\subsection{Recurrence relation for the slowest player} 
 
We have seen that the recurrence for two players was set up after each player has opened one more box, and checked whether or not they found a new coupon. 
Suppose now that there are $k$ players, where player $i$ is still missing $s_i$ coupons. The same idea is applied to obtain a recurrence relation for the number of boxes required for the slowest player in the $k$-player scenario. 
\begin{equation} \label{MainRec} 
M\left(s_1,s_2,\dots,s_k\right) =  \sum_{I \subseteq \{1,2,\dots,k\}} 
\underbrace{\left[\prod_{j \in I} \frac{s_j}{n}\right]
 \left[\prod_{j \not\in I} \left( 1-\frac{s_j}{n}\right) \right] M\left(V_I\right)}_{\text{players in $I$ found a new coupon}}+1, 
 \end{equation}
with the initial condition $M(0,\dots,0)=0$
and $M(s_1,s_2,\dots,s_k)=0$ if at least one of $s_i <0$.  
 
The meaning of the notation in  \eqref{MainRec} is as follows. Let $I$ be the set of index (possibly empty) of the players who found a new coupon.
For each $I$, the probability that this event happens is  
$ \left[\prod_{j \in I} \frac{s_j}{n}\right] \left[\prod_{j \not\in I} \left( 1-\frac{s_j}{n}\right) \right]. $
$V_I$ represents the updated vector argument after the players in $I$ found a new coupon, that is,
\[ V_I := [ s_1-\delta_{I}(1), s_2-\delta_{I}(2), \dots, s_k-\delta_{I}(k) ] ,\] 
where $\delta_{I}(j) =1$ if $j \in I$ and 0 otherwise. In particular, we write $V_{\{\}}$, when no players found a new coupon, and $V_{\{j\}}$ when only the player $j$ found a new coupon.

\subsection{Solutions via difference equations} 

To solve the recurrence \eqref{MainRec} for the first two leading terms, we shall reformulate the solution as a difference equation. First, we expand out \eqref{MainRec} to~get
\begin{align}  \label{ge}
 M\left(s_1,s_2,\dots,s_k\right) =& \,1+ \prod_{j=1}^k \left( 1-\frac{s_j}{n}\right)  M\left(s_1,s_2,\dots,s_k\right) \\ \notag
&+ \sum_{j=1}^k\dfrac{s_j}{n} \left[\prod_{i\neq j} 
\left( 1-\frac{s_i}{n}\right) \right] M\left(s_1,\dots,s_j-1,\dots,s_k\right)\\ \notag
&+ \sum_{i<j}\dfrac{s_is_j}{n^2} \left[\prod_{l \not \in {i,j}} 
\left( 1-\frac{s_l}{n}\right) \right] M\left(s_1,\dots, s_i-1,\dots,s_j-1,\dots,s_k\right)+\dots. \notag
\end{align}

The coefficients of this recurrence indicate that the solution must be in the form: 
\begin{equation} 
\label{eq:T}
M\left(s_1,s_2,\dots,s_k\right) =nA_1+A_0+\dfrac{1}{n}A_{-1}+\dfrac{1}{n^2}A_{-2}+\dots, 
\end{equation}
where $A_i, \;\ i \leq 1$ is a function of $s_1,s_2,\dots,s_k.$

Having  a solution written in the form \eqref{eq:T}, the proof of Theorem \ref{thm:main} boils down to the problem of identifying $A_1$ and $A_0$ (which are the coefficients of the first two leading terms of the solution).
This also explains the remainder term $\mathcal{O}\left(\frac{1}{n}\right)$ in the theorem.

\subsection{Setting up difference equations}
To set up a difference equation for $A_1$, plug the assumed form \eqref{eq:T} into \eqref{ge} and equate the resulting constant terms (which correspond to  the leading term) on both sides of \eqref{ge}:
\[
A_0 = 1+A_0-\sum_{j=1}^k \dfrac{s_j}{n} nA_1(V_{\{\}}) + \sum_{j=1}^k \dfrac{s_j}{n}nA_1(V_{\{j\}}). 
\]
(For simplicity of notation, we omit the full vector argument $V_{\{\}}=\left[s_1,s_2,\dots,s_k\right]$ of $A_i$
when there is no ambiguity.) 

Simplifying the above equation, we obtain the difference equation of $A_1$
\begin{equation} \label{Good1}
\sum_{j=1}^k s_j \left(A_1(V_{\{\}}) - A_1(V_{\{j\}}) \right) = 1,
\end{equation} 
along with the initial condition $A_1(s,0,0,\dots,0) = H(s)$, obtained from the one-player scenario.
Note that \eqref{Good1} is a first order difference equation as $V_{\{\}}=[ s_1, s_2,\dots, s_k]$ and $V_{\{j\}}=[ s_1, s_2,\dots, s_j-1,\dots,s_k]$. Although we will soon explain how we come up with the solution,
the reader may quickly check that $A_1(V)=H(S),$ where $S = \sum_{i=1}^ks_i,$ satisfies the difference equation \eqref{Good1} and the initial condition.  

Next, we find a difference equation of $A_0$. After plugging in \eqref{eq:T}, we equate the coefficients of $\dfrac{1}{n}$ (which is the second leading term) on both sides of~\eqref{ge}:
\begin{align*}
\dfrac{1}{n}A_{-1} &= -\dfrac{\sum_{j=1}^k s_j}{n}A_0+ \dfrac{1}{n}A_{-1}
+ \sum_{i<j} \dfrac{s_is_j}{n^2}nA_1 \\
&+ \dfrac{\sum_{j=1}^k s_j}{n}A_0(V_{\{j\}}) -\sum_{i<j} \dfrac{s_is_j}{n^2}nA_1(V_{\{i\}}) 
 -\sum_{i<j} \dfrac{s_is_j}{n^2}nA_1(V_{\{j\}}) +  \sum_{i<j} \dfrac{s_is_j}{n^2}nA_1(V_{\{i,j\}}).
 \end{align*}

Simplifying the above equation, we obtain the difference equation of $A_0$
\[ \sum_{j=1}^k s_j \cdot \left(A_0(V_{\{\}})-A_0(V_{\{j\}})\right) 
= \sum_{i<j} s_is_j \cdot \left( A_1(V_{\{\}})-A_1(V_{\{i\}})
 -A_1(V_{\{j\}}) + A_1(V_{\{i,j\}})  \right).
\]

We can further simplify the above equation by substituting $A_1(V) = H(S)$, and the difference equation of $A_0$  becomes
\begin{equation} \label{Good2}   
\sum_{j=1}^k s_j \cdot \left(A_0(V_{\{\}})-A_0(V_{\{j\}})\right) 
=  \sum_{i<j} s_is_j \left( \dfrac{1}{S}-\dfrac{1}{S-1} \right),
\end{equation}
together with the initial condition $A_0(s,0,0,\dots,0) = 0$. This condition is due to the absence of the other terms except the leading term, $nH(s),$ in the solution of the classical one-player scenario.  Again, \eqref{Good2}   is a first order difference equation.

\subsection{Solving difference equations}

The difference equations \eqref{Good1} and \eqref{Good2} 
that we are dealing with are a discrete version of
first order linear partial differential equations. 
In particular, both difference equations take the following form
\[ x_1 \dfrac{\partial u}{\partial x_1} +  x_2 \dfrac{\partial u}{\partial x_2} + \dots
+  x_k \dfrac{\partial u}{\partial x_k} = f(x_1,x_2,\dots,x_k), \]
in the first quadrant ($x_i > 0$).

We digress momentarily to discuss the following proposition which gives a solution to a new family of PDEs, and will be used to come up with a ``good guess'' (solution) for our difference equations. 

\begin{proposition} \label{PDE2}
Let $M, N \geq 0$ and $P$ be a multivariate polynomial where the degree of each monomial is $N$. 
Let $\displaystyle X= \sum_{j=1}^k x_j$.  Then, the solution of   
\begin{equation}
\label{eq:PDE}
\sum_{j=1}^k x_j \dfrac{\partial u}{\partial x_j} = \left(\ln{X}\right)^M\cdot\dfrac{P(x_1,x_2,\dots,x_k)}{X^N} 
\end{equation}
is 
\[
 u(x_1,x_2,\dots,x_k) =\left( \dfrac{(\ln{X})^{M+1}}{M+1}+C\right)\cdot\dfrac{P(x_1,x_2,\dots,x_k)}{X^N},
\]
for any constant $C$.
\end{proposition} 

\begin{proof} We solve this by the method of characteristics. 
Suppose $u = f(x_1,x_2,\dots,x_k)$ is a differentiable function of $x_1,\dots,x_k$, where
each $x_i$ is parameterized as a function of $t.$
Then, by the chain rule, $u$ is a differentiable function of $t$ and
\[ \dfrac{du}{dt} = \sum_{i=1}^k\dfrac{ \partial f}{ \partial x_i}\dfrac{dx_i}{dt} .\]
In order to find the solution, we solve
\[ \dfrac{ dx_i}{dt} = x_i,  \;\  1\leq i \leq k,  \;\ \;\
\text{ and }
\;\ \;\ \dfrac{ du}{dt} =  (\ln{X})^M\dfrac{P(x_1,x_2,\dots,x_k)}{X^N}. \]
Then, $x_i = c_ie^t$ where $c_i$ are constants. Substitute this into $\dfrac{du}{dt}$ to get
\[  \dfrac{ du}{dt} =  \ln{\left(\sum_ic_i e^t\right)^{M}}\dfrac{P(c_1,c_2,\dots,c_k)}{\left(\sum_i c_i\right)^N}. 
\]
The last equality holds because $P$ is a multivariate polynomial where the degree of each monomial is $N$.
The final step is to integrate both sides of the differential equation to find:
\begin{align*} 
u &=  \dfrac{P(c_1,c_2,\dots,c_k)}{(\sum_i c_i)^N}   \int \ln{\left(\sum_ic_i e^t\right)^{M}\, dt} \\
&=  \dfrac{P(c_1,c_2,\dots,c_k)}{(\sum_i c_i)^N}   \left( \dfrac{ \ln\left(\sum_ic_i e^t\right)^{M+1}}{M+1}+C\right) \\
&= \dfrac{P(x_1,x_2,\dots,x_k)}{X^N} \left( \dfrac{(\ln{X})^{M+1}}{M+1}+C \right), \,\, \text{ for any constant } C.
\end{align*}
\end{proof}

We will now make use of a more readily available solution to this family of PDEs to obtain a solution for our difference equations  \eqref{Good1} and \eqref{Good2}. 

For \eqref{Good1}, after comparing the target \eqref{Good1} to the PDE \eqref{eq:PDE}, we apply the proposition with $M=0$ and $N=0$. The obtained solution $u=\ln(X)+C$ reminds us of  the Harmonic number in a discrete version.  Thus, our guess is $\tilde{A}_1(s_1,s_2,\dots,s_k)=H(S)+C$.
Of course, we have to verify that this solution satisfies \eqref{Good1}, which obviously does. Moreover, the initial condition $A_1(0,0,\dots,0) = 0$ implies that the constant $C=0$.

Therefore, the particular solution to \eqref{Good1}, which is the coefficient of our leading term solution, is  
\[ A_1(s_1,s_2,\dots,s_k) = H(S).
\]

The target \eqref{Good2} suggests us to apply the proposition with $M=0$ and $N=2$, which gives the  solution $u = -\left(\ln{(X)}+C\right)\dfrac{\sum_{i<j}x_ix_j}{X^2}$.  Thus, a guess for the discrete analogue is
\begin{equation}
\label{eq:guess2}
\tilde{A}_0(s_1,s_2,\dots,s_k) = -\frac{\left(H(S)+C\right)\sum_{i<j}s_is_j}{S(S-1)}.
\end{equation}
(Notice the difference between the denominators $S(S-1)$ and $X^2$ first arising in the target \eqref{Good2} and the PDE \eqref{eq:PDE}, and later appearing again in their solutions.)

In order to verify that this guess is indeed the solution of \eqref{Good2}, there is one tricky calculation, which will be dealt with in the next lemma. 

\begin{lemma}  Assume $\tilde{A}_0(s_1,s_2,\dots,s_k)$ as in \eqref{eq:guess2}. Then,
\[ \sum_{j=1}^k s_j\tilde{A}_0(V_{\{j\}}) 
=  - \dfrac{H(S-1)+C}{(S-1)}\left(\sum_{i<j}s_is_j  \right)   .\]
\end{lemma}
\begin{proof}
Consider vectors $V_{\{\}}=[s_1,s_2,\dots,s_k]$ and  $V_{\{j\}}=[s'_1,s'_2,\dots,s'_k]=[s_1,s_2,\dots,s_j-1,\dots,s_k]$.
Then,
 \[\sum_{j=1}^k s_j\tilde{A}_0(V_{\{j\}}) 
= \sum_{j=1}^k s_j \left[ -\frac{H(S-1)+C}{(S-1)(S-2)}\cdot \sum_{i<l}s'_is'_l \right]
= -\frac{H(S-1)+C}{(S-1)(S-2)} \cdot \sum_{j=1}^k s_j \sum_{i<l}s'_is'_l, \]
and
\[\sum_{i<l} s'_is'_l = \sum_{i<l} s_is_l - \sum_{i=1}^k s_i +s_j.   \]
Therefore,
\[ \sum_{j=1}^k s_j \sum_{i<l} s'_is'_l =
S\sum_{i<l} s_is_l -S^2+\sum_{j=1}^k s_j^2 
= (S-2)\sum_{i<l} s_is_l,
\]
and the result is immediate.
\end{proof}

We are now ready to verify the solution of \eqref{Good2}. Substitute our guess \eqref{eq:guess2} into the l.h.s. of \eqref{Good2}, and use the lemma to obtain 
\begin{align*}
\sum_{j=1}^k s_j  \left(\tilde{A}_0(V_{\{\}})-\tilde{A}_0(V_{\{j\}})\right)
&=  -\frac{\left(H(S)+C\right)\sum_{i<j}s_is_j}{S-1} + \dfrac{\left(H(S-1)+C\right)\sum_{i<j}s_is_j}{(S-1)} \\
&=\dfrac{\sum_{i<j}s_is_j}{S-1}\left(H(S-1)-H(S)\right)
=\sum_{i<j} s_is_j \left( \dfrac{1}{S}-\dfrac{1}{S-1} \right),
\end{align*}
and so \eqref{eq:guess2} is indeed a solution of \eqref{Good2}.

The unique value of $C$ can be determined by making sure that the initial condition $A_0(1,0,\dots,0) = 0$ is satisfied. In particular,  in order to make this point a removable singularity, it is necessary
that $H(1)+C=0$, and so $C=-1$.

Thus, the particular solution of \eqref{Good2}, which is the coefficient of our second leading term solution, is
\[ A_0(s_1,s_2,\dots,s_k) = -\frac{(H(S)-1)\sum_{i<j}s_is_j}{S(S-1)}.
\]
Having obtained the closed-form formula for $A_1$ and $A_0$, Theorem \ref{thm:main} has been verified. 

The next corollary, which is an immediate corollary of Theorem \ref{thm:main}, provides the solution to our original problem when all the $k$ players start with empty hands. 

\begin{corollary}
\label{cor:main}
 \[ M\left(n,n,\dots,n\right) \approx nH(kn) 
 -\frac{(H(kn)-1)(k-1)n}{2(kn-1)}. \]
\end{corollary}

\subsection{Remarks on the remainder} 

The formula given in Theorem \ref{thm:main} becomes more precise as $n$ increases.
For example, with $n=30,$ the exact value of $M(30,30,30)$
computed numerically from \eqref{MainRec} is $151.0692567,$
while Theorem \ref{thm:main} gives $151.1009707.$ 
The general formula for $M(30,30,30)$ computed from \eqref{MainRec} with symbolic $n$~is
\begin{align*} 
M(30,30,30) &= 5.082570603n-1.376147394-\dfrac{0.9078106927}{n}-
\dfrac{1.231342610}{n^2} \\
&-\dfrac{2.159152821}{n^3}-\dfrac{4.180289796}{n^4}
-\dfrac{7.669304559}{n^5}-\dfrac{7.488122252}{n^6}\\
&+\dots  .
\end{align*}

\section{Second proof of the theorem: Insight into the leading term} 

In this section, we give an alternative proof for the leading term solution from the probability point of view.  

Consider the coupon collector's problem in a continuous-time setting. Start with one player, who is missing $s$ coupons. Through the concept of interarrival times of an inhomogeneous counting process, let $W_1\sim \text{exp}\left(\lambda_1=\dfrac{s}{n}\right)$ be the time of the first arrival of the coupon. Similarly,  let $W_i\sim \text{exp}\left(\lambda_i=\dfrac{s-i+1}{n}\right)$ be the interarrival time (elapsed time) between the $(i-1)$th and the $i$th arrivals, for $i=2,\dots,s$. It follows that the expected completion time for this particular player satisfies 
\[
E[X(s)]=E\left[\sum_{i=1}^sW_i\right]=\sum_{i=1}^s\dfrac{1}{\lambda_i}=\sum_{i=1}^s \dfrac{n}{s-i+1} = nH(s).
\]

The concept of interarrival times can be extended to find the expected maximum time for the $k$ coupon collectors' problem. Assume that player $j$ is still missing $s_j$ coupons. Let $T_1$ be the time of the first arrival of a coupon, \textit{regardless of which player finds it}. Recall a classical property that the minimum of independent exponential random variables is again exponential with the rate parameter equals to the sum of the rates. 
Then,  $T_1\sim \text{exp}\left(\lambda_1=\dfrac{S}{n}\right)$, where $S=s_1+s_2+\dots+s_k$.
In addition,  let $T_i$ be the interarrival times between the $(i-1)$th and the $i$th arrivals of the coupon, \textit{regardless of which player finds the coupon}. 
By the independence of interarrival times 
and the player who finds the coupon,  $T_i\sim \text{exp}\left(\lambda_i=\dfrac{S-i+1}{n}\right)$. Finally, the completion time of the \textit{slowest} player is simply
\begin{equation}
\label{eq:sol_cont}
E[\max\{X_1(s_1),\dots, X_k(s_k)\}]=E\left[\sum_{i=1}^ST_i\right]=\sum_{i=1}^S\dfrac{1}{\lambda_i}=\sum_{i=1}^S \dfrac{n}{S-i+1} = nH(S).
\end{equation}

Here, things simplify as two events cannot occur at the same time, and it does not matter which player finds
a next new coupon as the rate parameter of the counting process is based solely on the total number of coupons still missing at that~time.

One can write a recurrence relation for the continuous-time setting as
\begin{equation}
\label{eq:contRecurrence}
M\left(s_1,s_2,\dots,s_k\right)=\sum_{j=1}^k\left(\frac{s_j}{S}\right)M(s_1,s_2,\dots,s_j-1,\dots,s_k)+\frac{n}{S},
\end{equation}
where $S=\sum_{i=1}^ks_i$.

An interpretation of the recurrence relation is now given.
Since the rate parameter of a new arrival is $\lambda=\dfrac{S}{n}$, the last term $\dfrac{n}{S}$ represents the mean arrival time of a new coupon (regardless of which player finds it). Moreover, by recalling another classical property of the exponential distribution concerning the probability of $j$th random variable being smallest among others 
, the term $\dfrac{s_j}{S}$ is the probability that player $j$ is the one who finds the next new coupon, as one would expect. While no such explanations can be given when we solved the difference equations in the discrete-time setting, the continuous-time setting allows us to gain full insight into the recurrence relation and the solution we already obtained.  

Last but not least, the fact that the solution \eqref{eq:sol_cont} coincides with the leading term solution of the discrete-time recurrence is not a mere happenstance. In fact, the recurrence relation for the leading term solution can be obtained by dropping those terms in \eqref{MainRec} which correspond to ``multiple players finding a new coupon in the next box''. As a result, we arrive at precisely the same recurrence \eqref{eq:contRecurrence}. 

\section{Miscellaneous topics}
The final section contains a miscellaneous selection of results related to the $k$ coupon collectors' problem. 

\subsection{The fastest player}
The expected number of boxes required for the {\it fastest player} to complete the whole collection turns out to be a corollary of our main theorem.  

\begin{corollary}
The expected number of boxes required for the {\it fastest} player to collect all $n$ coupons, $\E\left[\min \{X_1{(s_1)},\dots,X_k{(s_k)}\}\right]$, is given by
\begin{align*}
&\sum_{i=1}^k M(s_i)-\sum_{i<j}M(s_i,s_j)
+\sum_{i<j<l}M(s_i,s_j,s_l)
+\cdots+(-1)^{k-1}M\left(s_1,s_2,\dots,s_k\right) \\
&= n\left(\sum_{i=1}^k H(s_i)-\sum_{i<j}H(s_i+s_j)
+\sum_{i<j<l}H(s_i+s_j+s_l)
+\cdots+(-1)^{k-1}H\left(\sum_{i=1}^{k}s_i\right)\right) +\mathcal{O}(1).
\end{align*}
\end{corollary}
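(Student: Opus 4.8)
The plan is to get the first (exact) equality from a pointwise min–max inclusion–exclusion identity, and then read off the asymptotic form by plugging in Theorem~\ref{thm:main}. The identity I would use is that for any real numbers $x_1,\dots,x_k$,
\[
\min_{1\le i\le k} x_i \;=\; \sum_{\emptyset\neq T\subseteq\{1,\dots,k\}}(-1)^{|T|-1}\,\max_{i\in T}x_i .
\]
A quick way to prove this: since both sides are symmetric, relabel so that $x_1\le\cdots\le x_k$; then $\max_{i\in T}x_i=x_{\max T}$, and grouping the subsets $T$ by their largest element $j$ shows that the coefficient of $x_j$ on the right is $\sum_{t\ge 1}(-1)^{t-1}\binom{j-1}{t-1}=\sum_{u\ge 0}(-1)^{u}\binom{j-1}{u}$, which equals $1$ if $j=1$ and $0$ otherwise. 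Hence the right-hand side collapses to $x_1=\min_i x_i$. (This is the standard max–min inclusion–exclusion formula.)

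Next I would apply this identity pointwise to the random variables $X_1(s_1),\dots,X_k(s_k)$, which are jointly defined and each have finite mean. By linearity of expectation, and because $\E\big[\max_{i\in T}X_i(s_i)\big]=M\big((s_i)_{i\in T}\big)$ by the very definition of $M$, this yields precisely the first displayed line of the Corollary, with the sign of the size‑$r$ term equal to $(-1)^{r-1}$ as stated (the singleton terms $M(s_i)=nH(s_i)$ being the $k=1$ instance of Theorem~\ref{thm:main}, where the double sum is empty).

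For the second equality I would substitute Theorem~\ref{thm:main} into each of the $2^k-1$ terms. Writing $S_T=\sum_{i\in T}s_i$, the leading terms $nH(S_T)$ assemble exactly into the claimed alternating sum $n\big(\sum_i H(s_i)-\sum_{i<j}H(s_i+s_j)+\cdots+(-1)^{k-1}H(S)\big)$. What remains is to bound the leftover. Each correction term $\dfrac{(H(S_T)-1)\sum_{a<b\in T}s_as_b}{S_T(S_T-1)}$ is $O(\log n)$: using $\sum_{a<b\in T}s_as_b\le\tfrac12 S_T^2$ the fraction is $O(H(S_T))=O(\log n)$ whenever $S_T\ge 2$, while the finitely many cases with $S_T\le 1$ contribute only $O(1)$. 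Each $o(1)$ remainder from Theorem~\ref{thm:main} is trivially $o(n)$, and since $k$ (hence the number of subsets) is fixed, the aggregate error stays $O(\log n)=o(n)$, giving the stated estimate.

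I do not expect a serious obstacle: the only points needing care are the clean verification of the pointwise min–max identity and checking that the Theorem~\ref{thm:main} correction terms are \emph{uniformly} $O(\log n)$ rather than blowing up as the $s_i$ grow toward $n$ — which is exactly why one records the bound $\sum_{a<b}s_as_b\le\tfrac12 S^2$ explicitly. All the substantive work lives in Theorem~\ref{thm:main}; this corollary merely piggybacks on it via inclusion–exclusion.
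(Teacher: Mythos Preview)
Your proposal is correct and follows essentially the same approach as the paper: apply the maximum--minimum identity pointwise, take expectations by linearity, and substitute the leading term from Theorem~\ref{thm:main}. You supply more detail than the paper does (a proof of the identity and an explicit $O(\log n)$ bound on the correction terms), but the underlying argument is identical.
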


\begin{proof}
Retaining only the leading term, the result follows immediately from the {\it maximum-minimum identity}
: 
\begin{align*}
\label{eq:max-min}
\min \{X_1,\dots,X_k\}&=\sum_{i=1}^k X_i-\sum_{i<j}\max\{X_i,X_j\}
+\sum_{i<j<l}\max\{X_i,X_j,X_l\}\notag \\ 
&+\cdots+(-1)^{k-1}\max \{X_1,\dots,X_k\},
\end{align*}
and the linearity of expectation. The remainder is $\mathcal{O}(1)$ as we keep only the leading term in the solution.
\end{proof}

Figure \ref{fig:combinedmaxmin} shows the graphs of the expected numbers of boxes required for the slowest player $M(n,n,\dots,n)$ and fastest player $m(n,n,\dots,n)$ to complete the whole collection of $n$ coupons, where the number of players ranges from $k=1, 2, \dots, 40$.

\begin{figure}[h]
    \centering
    \includegraphics[width=0.73\textwidth]{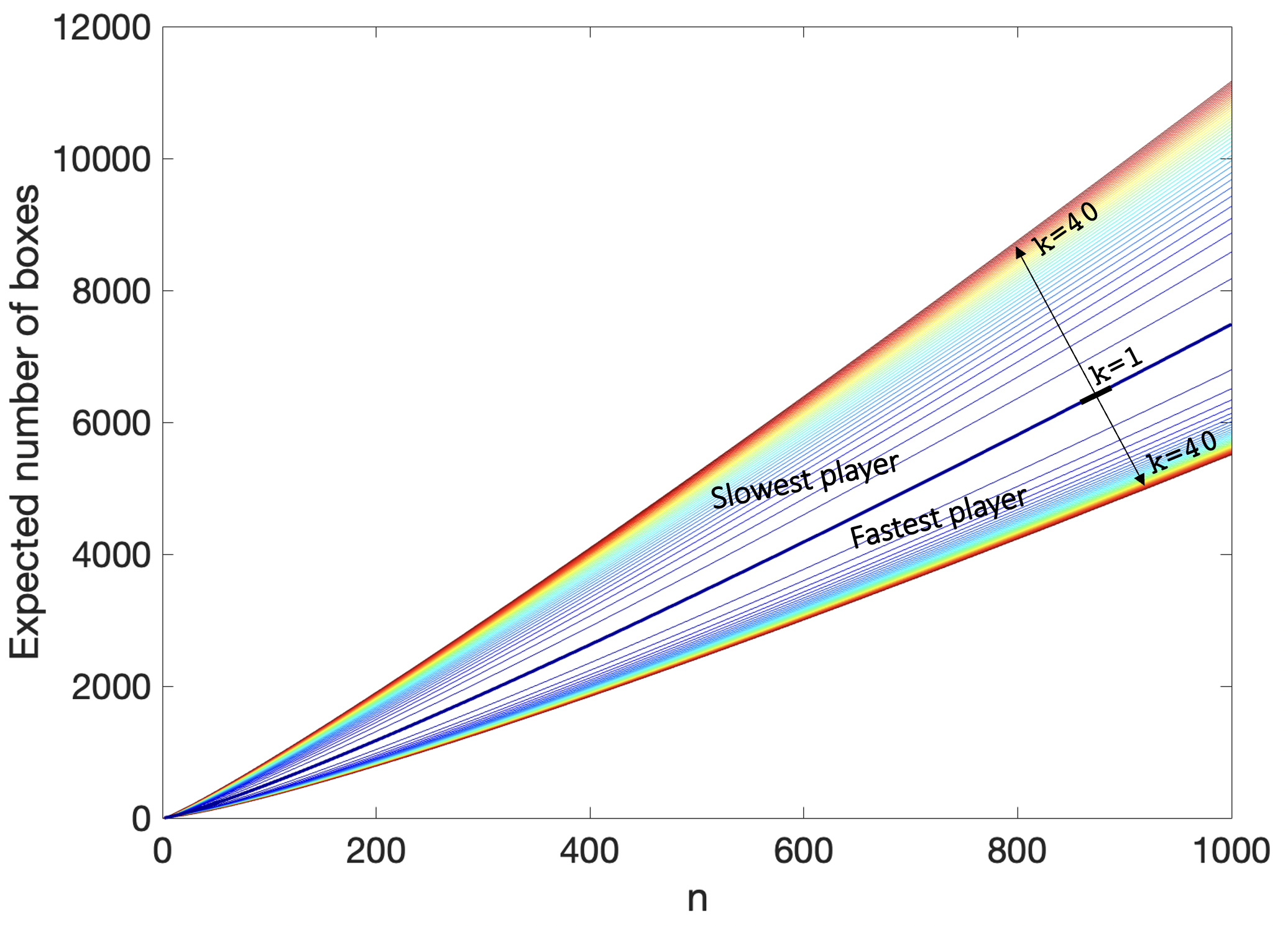}
    \caption{The expected number of boxes required for the slowest (top 40 lines) and fastest players (bottom 40 lines)  to complete the whole collection of $n$ coupons. Each line  represents a different number of $k$ players: the smallest value ($k=1$) is blue and the largest ($k=40$) red. The darkest line in the middle is a separator line corresponding to $k=1$ player, for which the slowest and fastest players are the same person.} 
    \label{fig:combinedmaxmin}
\end{figure}

\subsection{Probability of being the slowest player} 

We start with the probability of being the slowest player. 
Let $P_1\left(s_1,s_2,\dots,s_k\right)$ be the probability that the {\it first player} is the last person to complete the whole collection, i.e. $X_1\left(s_1\right)=\max \{X_1{(s_1)},X_2{(s_2)},\dots,X_k{(s_k)}\}$.  Then, we can write a recurrence
\begin{equation} \label{MainRecProb} 
P_1\left(s_1,s_2,\dots,s_k\right) =  \sum_{I \subseteq \{1,2,\dots,k\}} 
\underbrace{\left[\prod_{j \in I} \frac{s_j}{n}\right]
 \left[\prod_{j \not\in I} \left( 1-\frac{s_j}{n}\right) \right] P_1\left(V_I\right)}_{\text{players in $I$ found a new coupon}}, 
\end{equation} 
 with the initial conditions $P_1(s_1,0,\dots,0)=1$ if $s_1 \geq 0,$
 and $P_1(0,s_2,\dots,s_k)=0$ if some of $s_i > 0$,
 and $P_1(s_1,s_2,\dots,s_k)=0$ if at least one of $s_i <0$.  

The absence of  $+1$ term in this recurrence as compared to \eqref{MainRec} leads to the solution of the form: 
\begin{equation} 
\label{eq:T2}
P_1\left(s_1,s_2,\dots,s_k\right) =B_0+\dfrac{1}{n}B_{-1}+\dfrac{1}{n^2}B_{-2}+\dots, 
\end{equation}
where $B_i, \;\ i \leq 0$ is a function of $s_1,s_2,\dots,s_k.$

The next proposition finds the leading term solution $B_0$. Having  the solution written in the form \eqref{eq:T2} explains the remainder term $\mathcal{O}\left(\frac{1}{n}\right)$ in the proposition.

\begin{proposition}
\label{prop:prob_slowest}
Let $P_1\left(s_1,s_2,\dots,s_k\right)$ be the probability that the first player is slowest among the $k$ players to collect the whole set of $n$ coupons. Then, 
\[ P_1\left(s_1,s_2,\dots,s_k\right)=\frac{s_1}{\sum_{i=1}^{k}s_i} + \mathcal{O}\left(\dfrac{1}{n}\right). \]
\end{proposition}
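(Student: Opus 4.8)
The plan is not to attack the $k$-player recurrence for $P_1$ directly, but to follow the continuous-time philosophy of Section~3. First I would write the $k$-player analogue of the two-player recurrence displayed above: collecting the ``nobody picks up a missing coupon'' term on the left,
\[
\Bigl(1-\prod_{j=1}^{k}\bigl(1-\tfrac{s_j}{n}\bigr)\Bigr)P_1(s_1,\dots,s_k)=\sum_{\emptyset\neq A\subseteq\{1,\dots,k\}}\Bigl(\prod_{j\in A}\tfrac{s_j}{n}\Bigr)\Bigl(\prod_{j\notin A}\bigl(1-\tfrac{s_j}{n}\bigr)\Bigr)P_1\bigl((s_j-\mathbf{1}_{j\in A})_{j=1}^{k}\bigr),
\]
with $P_1=0$ once $s_1=0$ while some $s_j>0$, and $P_1=1$ once $s_1>0$ is the only nonzero coordinate; these specialize to the stated $P_1(1,0)=1$ and $P_1(0,1)=0$.

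Next I would pass to the \emph{continuous-time recurrence}, exactly as in \eqref{eq:sim_M1_2players}: discard every term with $|A|\ge2$ and keep only the leading order in $n$, so that the equation above collapses to
\[
P_1(s_1,\dots,s_k)=\sum_{j=1}^{k}\frac{s_j}{S}\,P_1(s_1,\dots,s_j-1,\dots,s_k),\qquad S=\sum_{i=1}^{k}s_i,
\]
with the same boundary values. This recurrence is well posed --- it writes $P_1$ at total $S$ as a convex combination of its values at total $S-1$, hence has a unique solution --- and a one-line substitution shows $s_1/S$ is that solution, since $\sum_{j}\frac{s_j}{S}\cdot\frac{s_1-\mathbf{1}_{j=1}}{S-1}=\frac{s_1S-s_1}{S(S-1)}=\frac{s_1}{S}$, while the boundary values $s_1/s_1=1$ and $0/S=0$ are immediate. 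I would also record the combinatorial reason for this, in the spirit of Section~3: in the continuous-time model the players who pick up the successive missing coupons are obtained by drawing, uniformly at random without replacement, from a pool holding $s_j$ tokens of colour $j$ for each $j$; player~$1$ is the slowest exactly when the \emph{last} token drawn has colour~$1$, which happens with probability $s_1/S$.

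It remains to argue that the discrete $P_1$ differs from $s_1/S$ only by $o(1)$. Substituting $s_1/S$ into the full recurrence, the gap between the two sides is an explicit error stemming from (i) the simultaneous-pickup terms $|A|\ge2$ and (ii) the replacement of $1-\prod_j(1-s_j/n)$ by $\sum_j s_j/n$; I would estimate this defect and then propagate it through the recursion, using that the self-loop weight $\prod_j(1-s_j/n)$ governs how an error at one level feeds into the next, to conclude that the total accumulated error is $o(1)$. This is the main obstacle, and it is not a matter of ``simultaneous pickups are rare'': when the $s_i$ are of order $n$ they are not rare at all (for $s_1=s_2=n$ two coupons are picked up on day one with certainty). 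The point is rather that the \emph{net} contribution of these corrections to $P_1$ still vanishes, which I expect is cleanest either by carrying an explicit $1/n$ correction term in the ansatz --- the same phenomenon already hidden inside the $+o(1)$ of the earlier theorems --- or by directly coupling the discrete play with its continuous-time counterpart. Finally, two sanity checks: $k=1$ gives $P_1=s_1/s_1=1$, and $s_1=\dots=s_k$ gives $P_1=1/k$ by symmetry; and since a tie for ``slowest'' occurs with probability $o(1)$, the convention chosen to resolve ties does not affect the statement.
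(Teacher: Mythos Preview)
Your approach is the same as the paper's in spirit --- the paper's entire proof reads ``The solution can be verified by substitution.'' You have simply filled in considerably more: the explicit $k$-player recurrence, the passage to the continuous-time recurrence \eqref{eq:sim_M1_2players}, the one-line check that $s_1/S$ solves it exactly, and the combinatorial reading via the last token drawn (which the paper records separately in Section~4.1). You are also more candid than the paper about what remains: the $o(1)$ discrepancy between the discrete and continuous-time answers is the genuine content, and neither your write-up nor the paper's actually carries out that error propagation. Your remark that simultaneous pickups are \emph{not} rare when the $s_i$ are of order $n$ is exactly right and shows you see where the difficulty lies; your proposed fixes (carry an explicit $1/n$ correction in the ansatz, or couple with the continuous model) are reasonable, but as things stand both your proof and the paper's leave this step as an assertion.
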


\begin{proof}
Following the same procedure as in the proof of Theorem \ref{thm:main}, one may prove this statement by means of an algebraic recurrence relation. Nevertheless, we will alternatively prove the leading term solution using a combinatorial interpretation through the continuous-time framework.  
The number of combinations where the first player finishes last 
(i.e. the last coupon is found by the first player) is  
$\binom{s_1-1+s_2+s_3+\dots+s_k}{s_1-1,s_2,s_3,\dots,s_k}$, 
and the probability of each combination is $\dfrac{s_1!s_2!s_3!\dots s_k!}{(\sum_{i=1}^{k}s_i)!}$ 
(following from our  discussion in a continuous-time setting that $\dfrac{s_j}{\sum_{i=1}^{k}s_i}$ 
is the probability that player $j$ is the one who finds the next new coupon. 

Thus,
\begin{align*}
P\left(\text{the first player is slowest}\right) 
&=  \dfrac{s_1!s_2!s_3!\dots s_k!}{(\sum_{i=1}^{k}s_i)!} \cdot 
\binom{s_1-1+s_2+s_3+\dots+s_k}{s_1-1,s_2,s_3,\dots,s_k} \\
&=\frac{s_1}{\sum_{i=1}^{k}s_i}.
\end{align*}
This completes the proof.
\end{proof}

\subsection{Remarks on the remainder of probability} 
The formula given in Proposition \ref{prop:prob_slowest} is more precise as $n$ increases.
For example, with $n=50,$ the exact value of $P_1(24,22,14)$
computed numerically from the recurrence 
is $0.4039306738,$
while Proposition \ref{prop:prob_slowest} gives $0.40.$ 
The general formula for $P_1(24,22,14)$ computed from the recurrence with symbolic $n$ is
\begin{align*} 
P_1(24,22,14) &= 0.4+\dfrac{0.1904262018}{n}+\dfrac{0.2926113116}{n^2} +\dfrac{0.6072298683}{n^3}\\
&+\dfrac{1.461461046}{n^4}+\dfrac{3.965909505}{n^5}+\dots  .  
\end{align*}

To conclude this work, the probability of being the fastest player, whose result is a corollary to Proposition \ref{prop:prob_slowest}, will now be discussed. 

\begin{corollary}
Let $Q_{1}\left(s_1,s_2,\dots,s_k\right)$ denote the probability that the first player is the fastest player to finish, i.e. $X_1\left(s_1\right)=\min \{X_1{(s_1)},X_2{(s_2)},\dots,X_k{(s_k)}\}$.   Then,
\begin{align*}
Q_{1}\left(s_1,s_2,\dots,s_k\right)&=1-\sum_{1<i} \frac{s_1}{s_1+s_i}+\sum_{1<i<j}\frac{s_1}{s_1+s_i+s_j}\\
&-\sum_{1<i<j<l}\frac{s_1}{s_1+s_i+s_j+s_l}+\cdots+(-1)^{k-1}\frac{s_1}{s_1+\dots+s_k}+\mathcal{O}\left(\dfrac{1}{n}\right).
\end{align*}
\end{corollary}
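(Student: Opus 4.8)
The plan is to write the event ``player~1 is fastest'' as the complement of a union of pairwise ``loss'' events, expand by inclusion--exclusion, and recognise each term as an instance of $P_1$ for a sub-collection of players, to which the preceding Proposition applies. Concretely, for $i\neq 1$ put $A_i=\{X_1(s_1)>X_i(s_i)\}$, the event that player~$i$ beats player~1. Then $\{X_1(s_1)=\min\{X_1(s_1),\dots,X_k(s_k)\}\}$ agrees with $\bigl(\bigcup_{i>1}A_i\bigr)^{c}$ except on the event that some tie $X_1(s_1)=X_i(s_i)$ occurs, so
\[
Q_1(s_1,\dots,s_k)=1-\mathbb{P}\Bigl(\bigcup_{i>1}A_i\Bigr)+(\text{tie correction}).
\]
Applying inclusion--exclusion to the $k-1$ events $A_2,\dots,A_k$ and distributing the outer minus sign produces exactly the alternating pattern $1-\sum_{1<i}\mathbb{P}(A_i)+\sum_{1<i<j}\mathbb{P}(A_i\cap A_j)-\cdots+(-1)^{k-1}\mathbb{P}(A_2\cap\cdots\cap A_k)$, with the signs as in the statement.

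The crux is identifying the generic term. For any index set $I\subseteq\{2,\dots,k\}$, the joint law of $\bigl(X_1(s_1),(X_i(s_i))_{i\in I}\bigr)$ does not depend on the players outside $\{1\}\cup I$, so $\mathbb{P}\bigl(\bigcap_{i\in I}A_i\bigr)=\mathbb{P}\bigl(X_1(s_1)>X_i(s_i)\ \forall i\in I\bigr)$ is, up to ties, the probability that player~1 is the slowest among the sub-collection indexed by $\{1\}\cup I$, i.e. $P_1\bigl(s_1,(s_i)_{i\in I}\bigr)$. By the Proposition this equals $\dfrac{s_1}{s_1+\sum_{i\in I}s_i}+o(1)$. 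Substituting these values and observing that there are only $2^{k-1}$ terms (with $k$ fixed), so that the $o(1)$ errors aggregate to $o(1)$, yields the claimed expansion. Equivalently, one can phrase the whole computation as the indicator form of the maximum--minimum identity: with $Y_i=\mathbf{1}[X_1(s_1)>X_i(s_i)]$ one has $Q_1=1-\mathbb{E}[\max_{i>1}Y_i]$, and expanding $\max_{i>1}Y_i$ by inclusion--exclusion (the $\max$-analogue of the identity used in the earlier corollaries, with $\min(Y_i,Y_j)=Y_iY_j$ for $0/1$ variables) reproduces the same sum; this makes the link to ``the same maximum--minimum identity we applied before'' explicit.

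The main obstacle I expect is handling ties cleanly. In the discrete-time process the events $\{X_1(s_1)=X_i(s_i)\}$ have strictly positive probability, so $\{X_1>X_i\ \forall i\in I\}$, $\{X_1\ge X_i\ \forall i\in I\}$, and ``player~1 slowest among $\{1\}\cup I$'' are not literally the same event. The tidy way around this is to run the argument in the continuous-time counterpart of Section~3, where each $X_i(s_i)$ is a sum of independent exponential interarrival times, two arrivals almost surely never coincide, $\mathbb{P}(X_1(s_1)=X_i(s_i))=0$, and the inclusion--exclusion identity holds verbatim with each marginal probability equal to $\dfrac{s_1}{s_1+\sum_{i\in I}s_i}$ by the same combinatorial count (product of step probabilities $\times$ number of orderings) used to evaluate $P_1$. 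Passing back to the discrete model changes $Q_1$ by at most the total tie probability, which is $o(1)$ since each such probability is $o(1)$ and there are finitely many of them. Once this bookkeeping is in place the rest is routine.
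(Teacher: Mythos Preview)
Your argument is correct and is exactly the approach the paper has in mind: the paper's entire ``proof'' is the single remark that the result follows from the same maximum--minimum identity used earlier, and your inclusion--exclusion on the events $A_i=\{X_1>X_i\}$ (equivalently, the indicator formulation with $Y_i=\mathbf{1}[X_1>X_i]$) is precisely that identity spelled out, together with the Proposition identifying each term as $s_1/(s_1+\sum_{i\in I}s_i)+o(1)$. Your extra care with ties via the continuous-time model is more than the paper itself provides.
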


\begin{proof}
The proof is a straightforward application of the inclusion-exclusion principle 
and Proposition \ref{prop:prob_slowest}. 
%
\end{proof}


\end{document}